\theoremstyle{definition}
\newtheorem{definition}{Definition}[]
\newtheorem{remark}[definition]{Remark}
\theoremstyle{plain}
\newtheorem{theorem}[definition]{Theorem}
\newtheorem{proposition}[definition]{Proposition}
\newtheorem{lemma}[definition]{Lemma}
\title{Robin Pre-Training for the Deep Ritz Method}
\date{May 2021}
\begin{document}

\author{
    Luca Courte \\
    University of Freiburg \\
    %Leipzig\\
    \texttt{luca.courte@mathematik.uni-freiburg.de}
   \And
    Marius Zeinhofer \\
    %Department of Applied Mathematics\\
    University of Freiburg\\
    \texttt{marius.zeinhofer@mathematik.uni-freiburg.de}
}

\maketitle

\begin{abstract}
    We compare different training strategies for the Deep Ritz Method for elliptic equations with Dirichlet boundary conditions and highlight the problems arising from the boundary values. We distinguish between an exact resolution of the boundary values by introducing a distance function and the approximation through a Robin Boundary Value problem. However, distance functions are difficult to obtain for complex domains. Therefore, it is more feasible to solve a Robin Boundary Value problem which approximates the solution to the Dirichlet Boundary Value problem, yet the na\"ive approach to this problem becomes unstable for large penalizations. A novel method to compensate this problem is proposed using a small penalization strength to pre-train the model before the main training on the target penalization strength is conducted. We present numerical and theoretical evidence that the proposed method is beneficial.
\end{abstract}

\section{Introduction}
After exceptional success in machine learning, neural networks become increasingly popular in numerical analysis. Strategies to approximate solutions of partial differential equations (PDEs) based on neural networks can be traced back to \cite{dissanayake1994neural} and \cite{lagaris1998artificial}. Their approaches are currently being revived and extended thanks to increased computational power and ease of implementation in frameworks like Tensorflow and PyTorch, see \cite{sirignano2018dgm, weinan2018deep, raissi2018hidden, li2020fourier, abadi2016tensorflow, paszke2019pytorch}.

Presently, a common drawback of neural network based methods is their poor reliability -- a failure to learn even a simple solution to a PDE is not uncommon, see \cite{wang2020understanding, van2020optimally}. In this note, we show that using the boundary penalty method known from the finite element literature, see for instance \cite{babuvska1973finite}, to approximately enforce Dirichlet boundary conditions in the Deep Ritz Method leads to such an unreliability. More precisely, large penalization parameters -- albeit mandatory for an accurate solution -- lead to highly fluctuating errors and sometimes even to a failure to approximate the solution at all. We present a pre-training strategy to alleviate this issue. The key idea is to pre-train the model using a small penalization parameter and subsequently to shift it by an optimal amount before conducting the training with the desired large penalization. This approach is supported by a theoretical motivation which shows that the difference between a (perfectly) pre-trained model and the true solution to the zero boundary value problem can be expanded in a orthonormal basis with the first basis function being constant. The shifting of the pre-trained model then corresponds to the elimination of the first Fourier coefficient.

Although our numerical results are conducted for the Poisson equation on model domains, the method is applicable to general domains and general elliptic equations and can easily be combined with more advanced optimization strategies as for example proposed by \cite{cyr2020robust, lee2021partition}. 
\subsection{Organization}
The paper is organized as follows. After clarifying notation in \ref{sec:Notation}, we give a short introduction to the Deep Ritz Method and related error estimates in \ref{sec:DeepRitzMethod}, then we give an overview of our main results and the numerical experiments in \ref{sec:NumericalMethods}. We present the details and results of the numerical experiments in sections \ref{sec:naive}, \ref{sec:pre-training} and \ref{sec:exact}. Finally, we draw conclusions in section \ref{sec:conclusions}.
\subsection{Notation}\label{sec:Notation}
On an open subset $\Omega$ of $\mathbb{R}^d$ with boundary $\partial\Omega$, we denote the Sobolev spaces of square integrable functions with square integrable weak derivatives by $H^1(\Omega)$. The subspace of $H^1(\Omega)$ with zero boundary values in the trace sense is denoted by $H^1_0(\Omega)$. The reader is referred to \cite{grisvard2011elliptic} for the precise definitions and more information on Sobolev spaces. For multivariate, scalar valued functions, we use the symbol $\Delta$ for the Laplace operator and $\nabla$ for the gradient. For a parameter $\theta$ in a parameter set $\Theta \subseteq \mathbb{R}^N$ we denote by $u_\theta$ the neural network function corresponding to $\theta$.
\subsection{The Deep Ritz Method}\label{sec:DeepRitzMethod}
In this section we briefly recall the Deep Ritz Method and the corresponding error estimates for linear elliptic equations. In general, the Deep Ritz Method proposed by \cite{weinan2018deep}, transforms the variational formulation of a PDE -- if available -- into a finite dimensional optimization problem using neural network type functions as an ansatz class. For example, suppose we want to approximately solve
\begin{equation}\label{eq:Dirichlet}
    \begin{split}
        -\Delta u & = f \quad \text{in } \Omega \\
    u & = 0 \quad \text{on } \partial\Omega,
    \end{split}
\end{equation}
where $\Omega$ is an open and bounded set in $\mathbb{R}^d$. It is well known that for a function \(u\in H^1_0(\Omega)\) and a right-hand side \(f\in H^1_0(\Omega)^*\) it is equivalent to solve \eqref{eq:Dirichlet} and to be a minimizer of the following optimization problem
\begin{equation}\label{eq:DirichletEnergy}
    u\in \underset{v\in H^1_0(\Omega)}{\arg\min} \; \frac12 \int_{\Omega} \left\lvert \nabla v\right\rvert^2 \mathrm dx - f(v).
\end{equation}
Now, we want want to minimize \eqref{eq:DirichletEnergy} over a class of neural network functions. However, it is unfeasible to enforce zero boundary values due to the unconstrained nature of neural networks. A solution is to use the boundary penalty method which allows to approximate \eqref{eq:DirichletEnergy} by an unconstrained problem. This approach was used by \cite{weinan2018deep}. More precisely, let $\lambda > 0$ be a fixed penalization parameter and denote by $\Theta$ a set of neural network parameters and consider the problem of finding a (quasi-)minimiser of the loss function
\begin{equation}\label{eq:LossFunction}
    \mathcal{L}_\lambda: \Theta \to \mathbb{R}, \quad \mathcal{L}_\lambda(\theta) = \frac12 \int_\Omega|\nabla u_\theta|^2\mathrm dx - f(u_\theta) + \lambda \int_{\partial\Omega}u_\theta^2\mathrm ds
\end{equation}
This is now an unconstrained optimisation problem that enforces zero boundary conditions approximately depending on the size of $\lambda$. To gain insight into the nature of this approach, we consider the energy, i.e., the loss function extended to all of $H^1(\Omega)$
\begin{equation*}
    E_\lambda: H^1(\Omega) \to \mathbb{R}, \quad E_\lambda(u) = \frac12 \int_\Omega|\nabla u|^2\mathrm dx - f(u) + \lambda \int_{\partial\Omega}u^2\mathrm ds
\end{equation*}
and its associated Euler Lagrange equation
\begin{equation}\label{eq:Robin}
    \begin{split}
        -\Delta u & = f \quad \text{in } \Omega \\
    \partial_n u + 2\lambda u & = 0 \quad \text{on } \partial\Omega.
    \end{split}
\end{equation}
Thus, using \eqref{eq:LossFunction} to approximate \eqref{eq:Dirichlet} means to use a Robin boundary value problem to approximate a Dirichlet condition. \cite{muller2021error} have shown that under certain regularity conditions, the error introduced by this scheme can be bounded by
\begin{equation}\label{eq:ErrorEstimate}
    \lVert u_\theta - u_0 \rVert_{H^1(\Omega)} \leq \lVert u_\theta - u_\lambda \rVert_{H^1(\Omega)} + \lVert u_\lambda - u_0 \rVert_{H^1(\Omega)} \leq \sqrt{ \frac{2\delta_\lambda}{\alpha_\lambda} + \frac{1}{\alpha_\lambda} \inf_{\tilde\theta\in\Theta}\lVert u_{\tilde\theta}-u_\lambda \rVert_\lambda^2 } + C(\Omega)\cdot\lVert f \rVert_{L^2(\Omega)}\lambda^{-1}.
\end{equation}
Here, $u_0$ denotes the solution to \eqref{eq:Dirichlet}, $u_\lambda$ is the solution to the Robin Boundary Value Problem \eqref{eq:Robin} and $u_\theta$ is a realization of a neural network with parameters $\theta$, e.g., produced through training. The remaining quantities are given by
\begin{gather*}
    \delta_\lambda = \mathcal{L}_\lambda(\theta) - \inf_{\tilde\theta\in\Theta}\mathcal{L}_\lambda(\tilde\theta), \quad \lVert \cdot \rVert_\lambda^2 = \lVert \nabla\cdot \rVert^2 + \lambda \lVert \cdot \rVert^2_{L^2(\partial\Omega)}, \quad \alpha_\lambda = \inf_{\lVert u \rVert_{H^1(\Omega)}=1}\lVert u \rVert_\lambda^2
\end{gather*}
and $C(\Omega)$ is a domain dependent constant. The estimate above indicates five potential sources of errors that guide our interpretation of the numerical experiments.
\begin{itemize}
    \item [(i)] The error introduced by the fact that the Deep Ritz Method approximates a Robin Boundary Value Problem. This is encoded in the term \[C(\Omega)\cdot\lVert f \rVert_{L^2(\Omega)}\lambda^{-1}.\]
    \item[(ii)] The optimization error made by the imperfect training. This is captured by the quantity 
    \begin{equation}\label{eq:ImperfectTrainingError} 
        \frac{2\delta_\lambda}{\alpha_\lambda} = \frac{2}{\alpha_\lambda}\left(\mathcal{L}_\lambda(\theta) - \inf_{\tilde\theta\in\Theta}\mathcal{L}_\lambda(\tilde\theta)\right). 
    \end{equation} 
    \item[(iii)] The error due to the lack of expressiveness of the ansatz class
    \[ \inf_{\tilde\theta\in\Theta}\lVert u_{\tilde\theta}-u_\lambda \rVert_\lambda^2,  \]
    i.e., the distance of an ideal neural network from the Robin solution in the norm $\lVert\cdot\rVert_\lambda$.
    \item[(iv)] The error caused by the numerical approximation of the integrals in the loss function.
    \item[(v)] The error introduced by potentially large constants $\alpha_\lambda^{-1}$ and $C(\Omega)$.
\end{itemize}

\begin{remark}[Sharpness of the Error Estimates] As we employ the triangle inequality in \eqref{eq:ErrorEstimate} it is unclear whether the estimate can be improved and might therefore be of limited use for discussing the errors observed in practice. However, for Dirichlet boundary conditions, we are interested in large values of $\lambda$ and in that case the influence of the triangle inequality is marginal as $u_\lambda$ approaches $u_0$. Furthermore, the estimates of the two summands resulting from the triangle equation's application are optimal. The first uses a C\'ea Lemma with the only estimate being a norm equivalence, compare to Proposition 3.1 in \cite{muller2021error}. The optimality of the second term follows by the observation that the solution to Dirichlet problem \eqref{eq:Dirichlet} on the disk differs by $\mathcal{O}(\lambda^{-1})$ from the solution of the Robin problem on the disk \eqref{eq:Robin}. We therefore deem the estimate \eqref{eq:ErrorEstimate} reliable to discuss the errors observed in practice.
\end{remark}

\subsection{Numerical Methods, Experiments \& Main Results}\label{sec:NumericalMethods}
We investigate and compare three approaches to deal with zero boundary values in the Deep Ritz Method. First, in section \ref{sec:naive} as a na\"ive idea, we use the loss function $\mathcal{L}_\lambda$ defined in \eqref{eq:LossFunction} for a fixed $\lambda > 0$. It turns out that this method becomes increasingly unstable as $\lambda$ increases. Furthermore, we find strong evidence that the most relevant error is due to the imperfect training, as listed in \eqref{eq:ImperfectTrainingError} above.

Second, as the direct usage of $\mathcal{L}_\lambda$ is unreliable, we propose a novel two step scheme. In the first step we train the network for a small parameter $\lambda_P$, i.e., we use the loss function $\mathcal{L}_{\lambda_P}$. We refer to this step as the pre-training. Then, we choose our (large) target value $\lambda_T$, shift the function produced through the pre-training optimally with respect to $\mathcal{L}_{\lambda_T}$ by a constant, and proceed with the training. See section \ref{sec:pre-training} for a theoretical interpretation of this method and numerical results. We find that this approach increases accuracy and reduces the variance of the error over multiple runs.

Finally, we circumvent the penalty method completely by encoding zero boundary values directly into the model as originally proposed by \cite{lagaris1998artificial}. The idea is to use a smooth function $d:\overline{\Omega}\to [0,\infty)$ that satisfies $d(x) = 0$ on $\partial\Omega$ and $d(x) >0$ in $\Omega$. We call $d$ a smooth distance function. Then, the ansatz space for the minimization of \eqref{eq:DirichletEnergy} is chosen to be \[ \{u_\theta\cdot d \mid \theta\in\Theta \}\subset H^1_0(\Omega), \] where $\Theta$ is a parameter set of a neural network architecture. Interestingly, we find that the choice of the smooth distance function has a significant influence on the accuracy of this approach, making an ideal choice difficult without a priori knowledge of the solution. Still, this approach performs best, with lowest error and variance. However, a smooth distance function is difficult or costly to obtain on complex domains. We refer the reader to \cite{berg2018unified}.

We compare these three methods for three different problems in two dimensions
\begin{enumerate}
    \item on the disk, i.e., $\Omega = B_{1}(0)$ and $f \equiv 1$,
    \item on the annulus, i.e., $\Omega = B_{2}(0) \setminus B_{1}(0)$ and $f \equiv 1$,
    \item on the square, i.e., $\Omega = [0, 1]^2$ and $f(x_1,x_2) \coloneqq 8\pi^2\sin(2\pi x_1)\sin(2\pi x_2)$.
\end{enumerate}
We chose these domains as exact solutions are analytically available and can be used to estimate the performance of the different optimization schemes. Indeed, we have
\begin{enumerate}
    \item on the disk, the analytical solution is given by $u^D(x) \coloneqq - \frac{1}{4} |x|^2 + \frac{1}{4}$ for $x\in B_1(0)$,
    \item on the annulus, the analytical solution is given by $u^A(x) \coloneqq - \frac{1}{4} |x|^2 + \frac{3}{4\log(2)} \log(|x|) + \frac{1}{4}$ for $x \in B_{2}(0) \setminus B_1(0)$,
    \item on the square, the analytical solution is given by $u^S(x_1,x_2) \coloneqq \sin(2\pi x_1)\sin(2\pi x_2)$ for $(x_1,x_2)\in[0,1]^2$.
\end{enumerate}
%\todo{do we need the following three lines?}
%Moreover, these examples allow to analyse the dependency on the domains. Finally, it is unclear if the error gets introduced by failing to resolve the boundary values exactly or the features of the solution in the interior of the domain. Hence, the last example on the square has an intricate structure in the interior. %Throughout the article, we will mention problems that arise for the different methods and compare them. Our conclusions are reported in section \ref{sec:conclusions}.

\subsection{Discretization, Network Architecture \& Optimization Routine}
To the best of our knowledge, there is no consensus yet on which network architectures and optimization strategies are favorable for the approximation of PDE solutions. We use small networks with moderate depth to keep the computational cost manageable. Also in view of the discretization of the loss functional $\mathcal{L}_\lambda$ in \eqref{eq:LossFunction} this seems reasonable, compare to remark \ref{remark:Warning}. Our precise choices that are global for all experiments are reported below. 
\begin{enumerate}
    \item We use fully connected feed forward networks with four hidden layers and input dimension two and output dimension one. All hidden layers have 14 neurons. We choose the hyperbolic tangent as our activation function.
    \item We initialize the weights using Glorot uniform initialization and the biases are initialized by zero.
    \item The numerical integration of the integrals for the training is done using fixed evaluation points. For a number $N\in\mathbb{N}$ we use points in the lattice $\frac1N\mathbb{Z}^2$ to approximate the integral of a function $f:\Omega\subset\mathbb{R}^2\to\mathbb{R}$ via 
    \[
        \int_{\Omega}f\mathrm dx = \frac1N \sum_{x_i \in \frac1N\mathbb{Z}^2\cap \operatorname{int}(\Omega)}f(x_i).
    \]
    Integrals over the boundary of a domain are approximated using arclength parametrization with $N\cdot|\partial\Omega|$ many equi-spaced evaluation points. Here $|\partial\Omega|$ denotes the length of $\partial\Omega$. For the square and the annulus we use roughly $250,000$ evaluation points, i.e., the lattice constants are $N=500$ and $N=160$ respectively. For the disk we use roughly $90000$ evaluation points, i.e, $N = 160$. We refer the reader to remark \ref{remark:Warning} and Appendix \ref{appendix:IntegralDiscretization} for a legitimisation of these choices.  
    \item For the optimization process we use the Adam optimizer with $10000$ iterations. Depending on the experiment we vary slightly with the learning rate. Details can be found in the corresponding sections.
    \item The relative $L^2(\Omega)$ and $H^1(\Omega)$ errors are computed using $10^6$ uniformly sampled evaluation points in $\Omega$ for every integral appearing in the respective norms.
\end{enumerate}
Note that we do not employ a stochastic gradient descent. In our loss function, we do not introduce a batch size and in the approximation of the integrals in \eqref{eq:LossFunction} we do not use random points. The only random effect is introduced through the networks weight initialization. 

In the following, it will sometimes be important to distinguish between the continuous loss as stated in \eqref{eq:LossFunction} and its numerical approximation which we call numerical loss function. It is given by applying the integral discretization described above to \eqref{eq:LossFunction}. More precisely, denoting by $N_{\textrm{int}}$ and $N_{\textrm{bdr}}$ the number of evaluation points $(x_i)$ in the interior of $\Omega$ and $(z_j)$ on its  boundary $\partial\Omega$ we have
\begin{equation}\label{eq:numericalLoss}
     \mathcal{L}^N_\lambda\colon\Theta\to\mathbb{R}, \quad \theta\mapsto \frac{1}{N_{\textrm{int}}} \sum_{i = 1}^{N_{\textrm{int}}}\left[ \frac12|\nabla  u_\theta(x_i)|^2 - f(x_i)u_\theta(x_i)\right] + \frac{\lambda}{N_{\textrm{bdr}}} \sum_{j = 1}^{N_{\textrm{bdr}}} u_\theta^2(z_j).
\end{equation}
\begin{remark}[A Warning]\label{remark:Warning} The numerical loss function is severely ill-posed on expressive sets of functions. For arbitary $\lambda$ consider 
\begin{equation*}
    \mathcal{L}^N_\lambda: H^1(\Omega)\cap C^1(\overline{\Omega}) \to \mathbb{R} \quad \text{with}\quad u \mapsto \mathcal{L}^N_\lambda(u).
\end{equation*}
Then $\mathcal{L}_\lambda^N$ is unbounded from below (and above) and does not admit a global minimizer. An arbitrary small value of $\mathcal{L}_\lambda^N$ can be achieved by a function that is zero on $\partial\Omega$, has vanishing gradients on the evaluation points in the interior and takes large values on the interior evaluation points. However, for a fixed size neural network this effect can be circumvented, provided enough evaluation points are used. This is another motivation to use small to medium size neural networks when working with the Deep Ritz Method in our setting.
\end{remark}

\section{Na\"ive Method}\label{sec:naive}
We apply the Deep Ritz Method with boundary penalty to the equations listed in section \ref{sec:NumericalMethods}. We use different but fixed penalization strengths and observe that large penalization values make the method unstable. We conclude that the decisive error contribution stems from the optimization.
\subsection*{Experiment Setup}
For the three equations, we use the boundary penalization strengths $\lambda = 1, 5, 10, 50, 100, 500, 1000, 5000$ and $10000$ and every experiment is conducted $25$ times to capture the stochastic influence of the initialization. The optimization process is carried out with $10000$ iterations of the Adam optimizer with learning rate $0.001$. The remaining technical details such as network architecture and integral discretization are recorded in section \ref{sec:NumericalMethods} and do not differ across the different experiments in this note. We record the relative $L^2(\Omega)$ and $H^1(\Omega)$ errors. Furthermore, in the case of the disk and the annulus also the solutions $u_D^\lambda$ and $u_A^\lambda$ to the Robin problem \eqref{eq:Robin} have an analytical form
\begin{gather*}
    u_D^\lambda(x) = -\frac14(x_1^2+x_2^2) + \frac14 +\frac{1}{4\lambda}
    \quad\text{and}\quad
    u_A^\lambda(x) = -\frac{\lvert x \rvert^2}{4} + C_1\log(\lvert x \rvert) + C_2,
\end{gather*}
where \[C_1 = \frac{1 + 2\lambda\left(\frac34 + \frac{1}{4\lambda}\right)}{\frac12 + 2\lambda\left(\log(2) + \frac{1}{2\lambda} \right)}\quad\text{and}\quad C_2 = \frac{C_1}{2\lambda} + \frac14 - \frac{1}{4\lambda}.\]
so we keep track of the relative $L^2(\Omega)$ and $H^1(\Omega)$ errors with respect to $u_D^\lambda$ and $u_A^\lambda$ as well.
\subsection*{Discussion}
\begin{figure}
    \centering
    \begin{subfigure}{0.48\linewidth}
    \includegraphics[width=\linewidth]{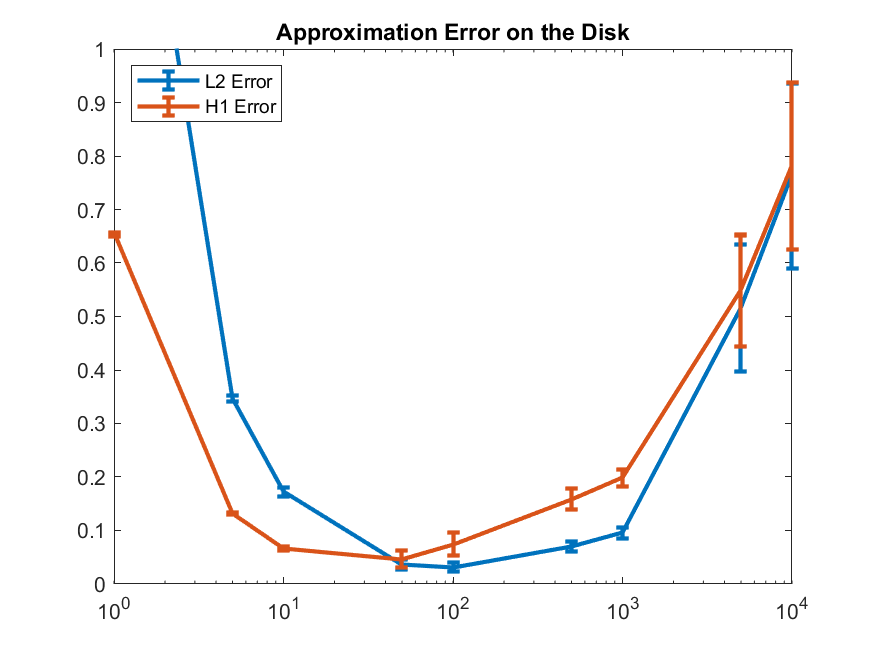}
    \end{subfigure}
    \begin{subfigure}{0.48\linewidth}
    \includegraphics[width=\linewidth]{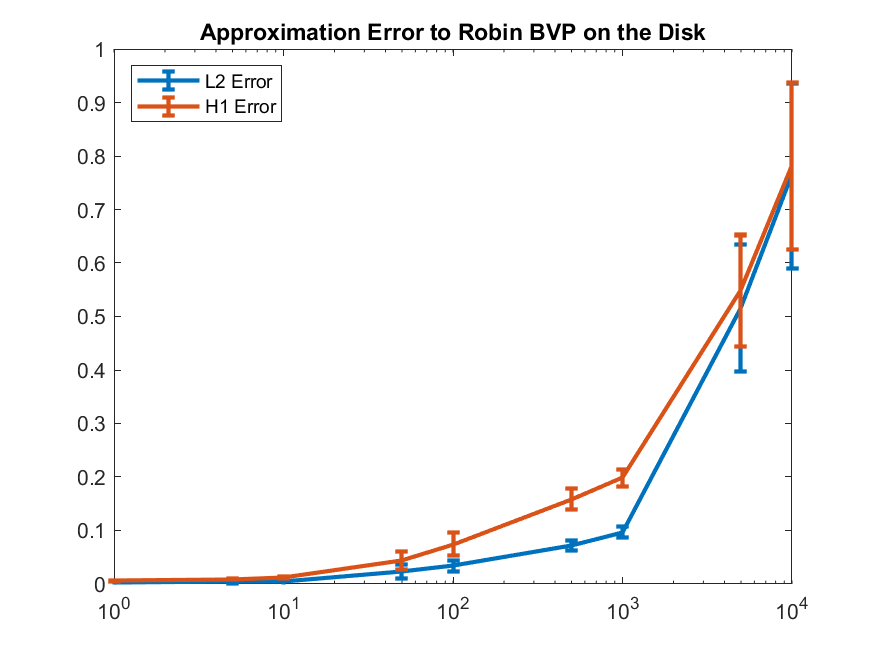}
    \end{subfigure}
    \caption{The expected relative error with its sample standard deviation for different values of $\lambda$ on the disk with respect to the analytical solution of the Dirichlet BVP (on the left) and to the solution of the Robin BVP (on the right).}
    \label{fig:reference_disk}
\end{figure}
\begin{figure}
    \centering
    \begin{subfigure}{0.48\linewidth}
    \includegraphics[width=\linewidth]{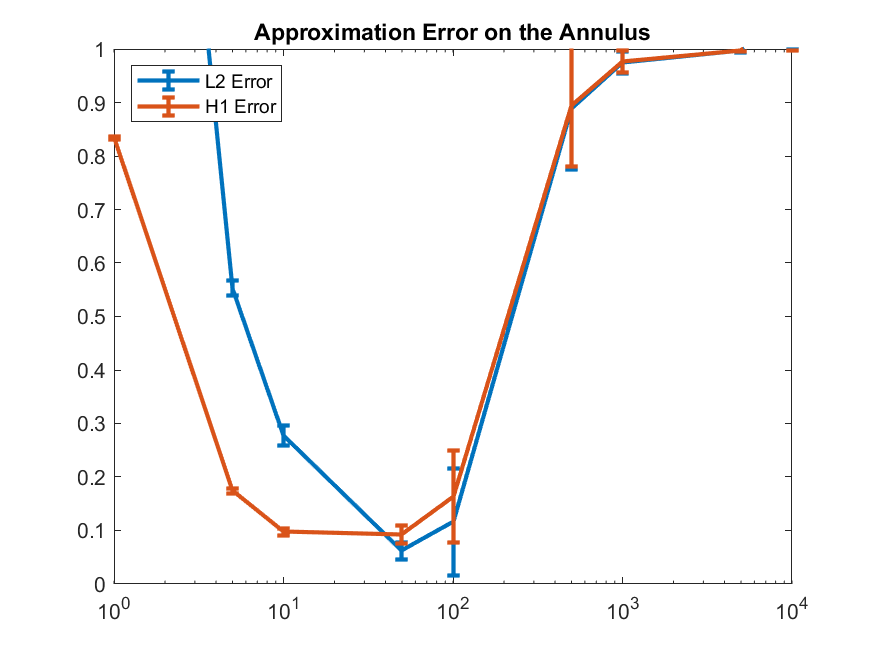}
    \end{subfigure}
    \begin{subfigure}{0.48\linewidth}
    \includegraphics[width=\linewidth]{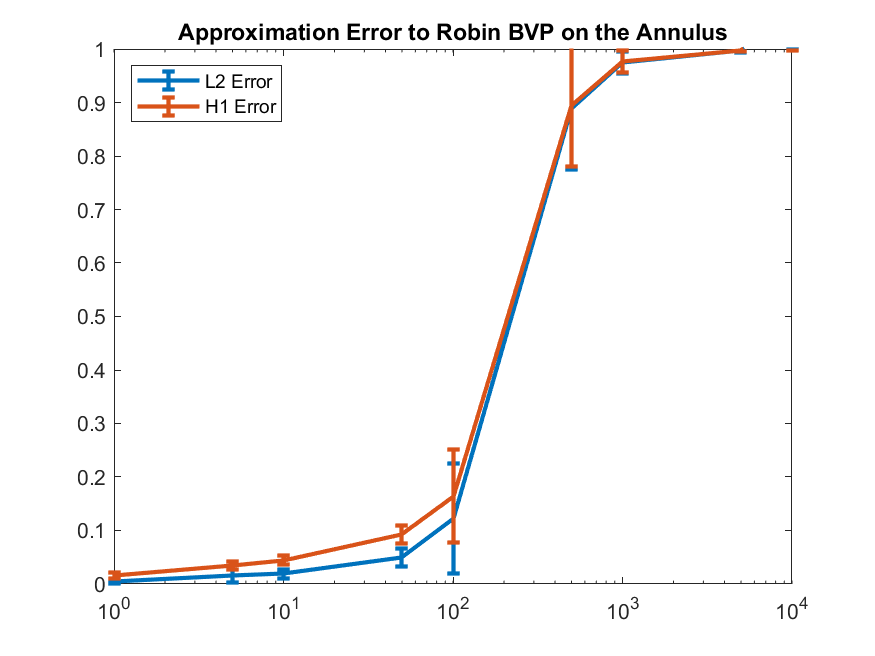}
    \end{subfigure}
    \caption{The expected relative error with its sample standard deviation  for different values of $\lambda$ on the annulus with respect to the analytical solution of the Dirichlet BVP (on the left) and to the solution of the Robin BVP (on the right).}
    \label{fig:reference_annulus}
\end{figure}
\begin{figure}
    \centering
    \begin{subfigure}{0.48\linewidth}
    \includegraphics[width=\linewidth]{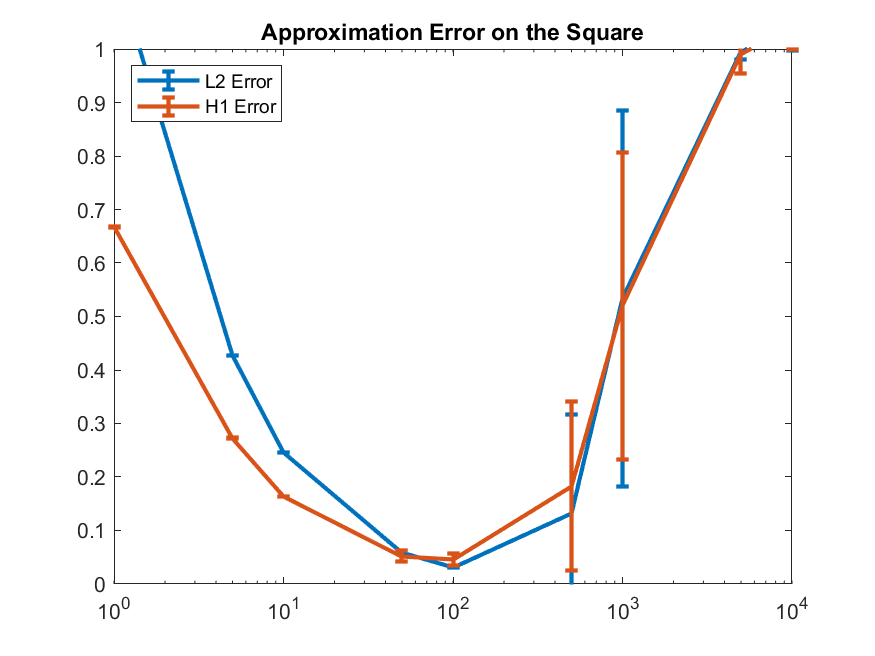}
    \end{subfigure}
    \caption{The expected relative error with its sample standard deviation  for different values of $\lambda$ on the square.}
    \label{fig:reference_square}
\end{figure}

We report the relative errors in figure \ref{fig:reference_disk}, figure \ref{fig:reference_annulus}, and figure \ref{fig:reference_square} respectively. We observe a clear trade-off between too large and too small penalization parameters for all three equations. This is in accordance with the error estimates \eqref{eq:ErrorEstimate}.
We begin by investigating the experiment on the disk more closely. Specializing the error estimates to the case of the disk and estimating all unknown constants yields
\begin{equation*}
    \lVert u_\theta - u \rVert_{H^1(\Omega)} \leq \sqrt{ \frac{8+10\lambda}{\lambda}\delta_\lambda + \frac{4+5\lambda}{\lambda} \inf_{\tilde\theta\in\Theta}\lVert u_{\tilde\theta}-u_\lambda \rVert_\lambda^2 } + \frac{\pi}{4\lambda}.
\end{equation*}
We refer the reader to Appendix \ref{appendix:ErrorEstimateDisk} for the derivation of this estimate. The trade-off is due to the fact that the second term vanishes with $\lambda\to\infty$ but is large for small values of $\lambda$ and in the first term both
\begin{gather*}
    \delta_\lambda 
    = 
    \mathcal{L}_\lambda(\theta) - \inf_{\tilde\theta\in\Theta}\mathcal{L}_\lambda(\tilde\theta) \quad\text{and}\quad \inf_{\tilde\theta\in\Theta}\lVert u_{\tilde\theta}-u_\lambda \rVert_\lambda^2 
    =
    \inf_{\tilde\theta\in\Theta}\left[ \int_\Omega \lvert \nabla (u_{\tilde\theta} - u_\lambda) \rvert^2\mathrm dx + \lambda\int_{\partial\Omega}(u_{\tilde\theta} - u_\lambda)^2\mathrm ds\right]
\end{gather*}
can potentially be large for large values of $\lambda$. From the error plot in figure \ref{fig:reference_disk} (left) it is not immediately clear whether $\delta_\lambda$ or $\inf_{\tilde\theta\in\Theta}\lVert u_{\tilde\theta}-u_\lambda \rVert_\lambda^2$ introduces larger errors. However, a look at the solution to the Robin problem on the disk reveals that $u_D^\lambda$ can be approximated -- at least pointwise -- in the same quality for every $\lambda$ as different values of $\lambda$ only correspond to different constant terms in $u_D^\lambda$ which can be taken care of by the last bias of the approximating neural network. Still, the error to the Robin solution increases by a factor of over $100$ with growing penalization strength and also the variance of the error over multiple runs increases drastically, see figure \ref{fig:reference_disk} (right).

\begin{table}[ht]
\centering
\caption{Best relative errors with sample standard deviations}
\label{table:BestErrorsNaive}
 \begin{tabular}{| c | c | c |} 
 \hline
 %\rowcolor{lightgray}
 Domain & Best relative $L^2$ error & Best relative $H^1$ error  \\ [0.5ex] 
 \hline\hline
 Disk & $(3.14 \pm 0.82) \cdot 10^{-2}$ achieved for $\lambda = 100$ & $(4.61\pm 1.61) \cdot 10^{-2}$ achieved for $\lambda = 50$  \\ 
 \hline
 Annulus & $(6.19 \pm 1.55) \cdot 10^{-2}$ achieved for $\lambda = 50$ & $(9.33\pm 1.68) \cdot 10^{-2}$ achieved for $\lambda = 50$ \\ 
 \hline
 Square & $(3.17 \pm 0.12) \cdot 10^{-2}$ achieved for $\lambda = 100$ & $(4.62\pm 1.14) \cdot 10^{-2}$ achieved for $\lambda = 100$ \\ 
 \hline
 
\end{tabular}
\end{table}

A similar behavior is observed for the annulus, with high accuracy and low variance when measuring how well the Deep Ritz Method approximates the Robin solution for small penalization strengths. We refer to figure \ref{fig:reference_annulus}. In case of the square we do not have access to the Robin solution, yet an increasingly unstable training process is also clearly visible here, see figure \ref{fig:reference_square}. In particular, for all three equations a total failure to train, i.e., a relative error of around $100\%$ frequently occurs in the large penalization regime. In this case the zero function is learned. Heuristically, we attribute the deteriorating accuracy to a growing number of increasingly attractive, poor local minima in the loss landscape for large penalization strengths until for very large penalizations even the zero function can result from training. A large value of $\lambda$ forces the gradient descent dynamics violently towards zero boundary conditions at the expense of other features of the PDEs solution, hence good minima become less attractive and difficult to find. For the experiments, the overall best performing $\lambda$ are reported in table \ref{table:BestErrorsNaive}.

In our discussion so far, we did not take the numerical approximation of the integrals into account as the error estimates we used only hold for the continuous loss. As noted in remark \ref{remark:Warning} this is a potential source for errors. To exclude a relevant influence, we use a typical training session of one of the equations in section \ref{sec:NumericalMethods} and we monitor the loss for two different approximations of the integrals; one using the standard, medium fine grid and the other considerably finer. We conclude that the error introduced by the integral approximation can be neglected and refer to Appendix \ref{appendix:IntegralDiscretization} for the details.
\section{Refined Method}\label{sec:pre-training}
In the previous experiment, we have seen that the na\"ive approach using the boundary penalty method introduces large errors and unstable training dynamics, especially when compared to the essential boundary conditions of the Robin problem, i.e, the case when $\lambda$ is small. To mitigate this effect, we propose to conduct a pre-training using a low penalization strength. The motivation behind this strategy is a similarity of the solution $u_\lambda$ to the Robin problem \eqref{eq:Robin} and the solution $u_0$ to the Dirichlet problem \eqref{eq:Dirichlet}. We find that the proposed pre-training significantly reduces both the errors and the errors' variance.
\subsection{Experiment Setup}
We consider the equations listed in section \ref{sec:NumericalMethods}. The following training strategy is used
\begin{enumerate}
    \item [(i)] We set a pre-training penalization strength, in our case $\lambda_P = 1$, and train the model using the loss function $\mathcal{L}_{\lambda_P}$ for $4000$ iterations with the Adam optimizer. Here the first $1000$ iterations the learning rate is set to $0.01$ and for the remaining $3000$ iterations we use $0.001$. Let us denote the the neural network realization that results from this training by $u_{\theta_P}$.
    \item[(ii)] As target penalization strengths we use $\lambda_T = 1,\ 5,\ 10, \ 50,\ 100, \ 500, \ 1000, \ 5000, \ 10000$, and shift the network function $u_{\theta_P}$ adding a optimal constant $t_{\lambda_T}$ namely
    \[ \tilde{u}_{\theta_P} = u_{\theta_P} + t_{\lambda_T} = u_{\theta_P} + \frac{1}{2\lambda_T|\partial \Omega|} \int_\Omega f(x) \;\mathrm{d}x - \frac{1}{|\partial \Omega|} \int_{\partial \Omega} u_{\theta_P}(x) \;\mathrm{d}x. \]
    We denote the shifted function by $\tilde{u}_{\theta_P}$. Finally we train $\tilde{u}_{\theta_P}$ for $6000$ iterations with the Adam optimizer with learning rate $0.001$ and the loss function $\mathcal{L}_{\lambda_T}$ to conclude the training.
\end{enumerate}
A theoretical motivation for this approach is given in section \ref{sec:Motivationpre-training}.
\subsection{Discussion}

\begin{figure}
    \centering
    \begin{subfigure}{0.48\linewidth}
    \includegraphics[width=\linewidth]{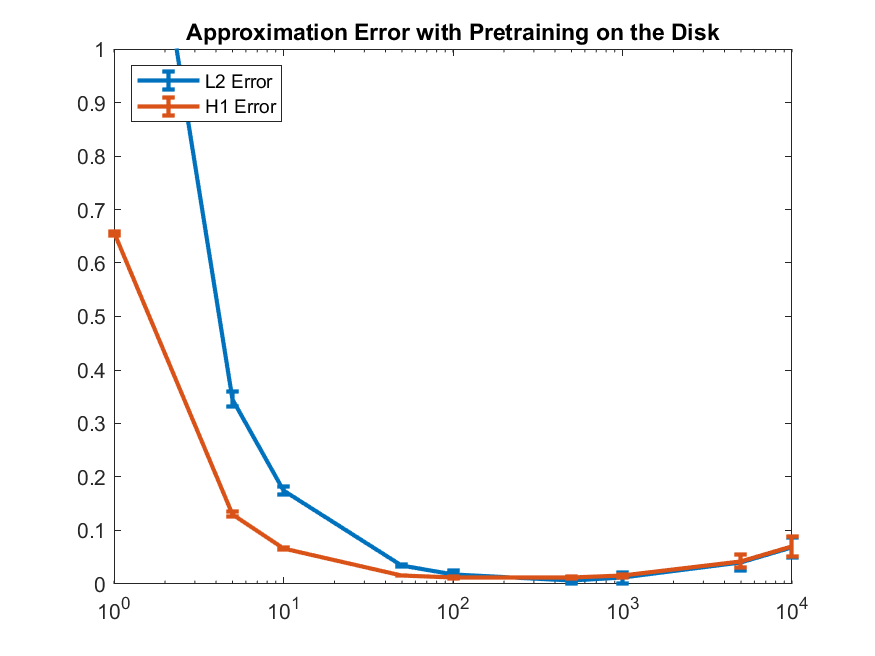}
    \end{subfigure}
    \begin{subfigure}{0.48\linewidth}
    \includegraphics[width=\linewidth]{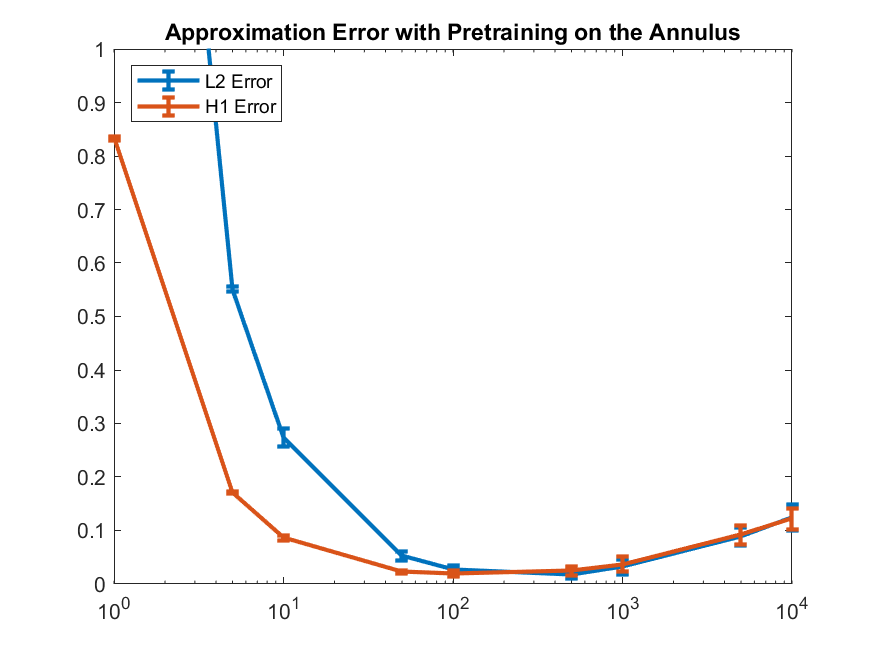}
    \end{subfigure}
    \begin{subfigure}{0.48\linewidth}
    \includegraphics[width=\linewidth]{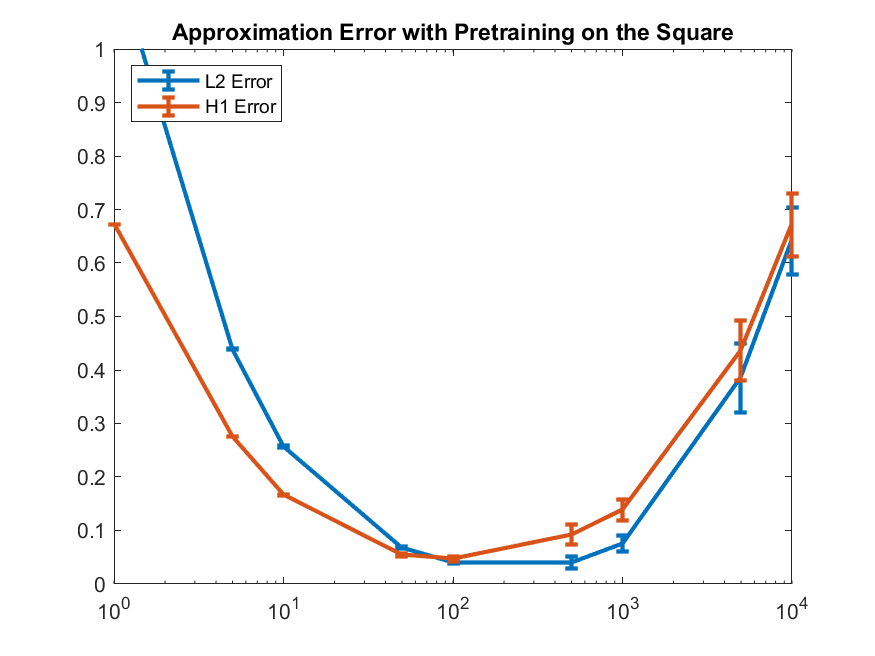}
    \end{subfigure}
    \caption{The expected relative error with its sample standard deviation for different values of $\lambda$ with pre-training.}
    \label{fig:pre-training}
\end{figure}

\begin{table}[ht]
\centering
\caption{Best relative errors with sample standard deviations with pre-training}
\label{table:BestErrorspre-training}
 \begin{tabular}{| c | c | c |} 
 \hline
 %\rowcolor{lightgray}
 Domain & Best relative $L^2$ error & Best relative $H^1$ error  \\ [0.5ex] 
 \hline\hline
 Disk & $(0.63 \pm 0.48) \cdot 10^{-2}$ achieved for $\lambda = 500$ & $(1.17\pm 0.21) \cdot 10^{-2}$ achieved for $\lambda = 500$  \\ 
 \hline
 Annulus & $(1.81 \pm 0.80) \cdot 10^{-2}$ achieved for $\lambda = 500$ & $(1.97\pm 0.47) \cdot 10^{-2}$ achieved for $\lambda = 100$ \\ 
 \hline
 Square & $(3.05 \pm 0.21) \cdot 10^{-2}$ achieved for $\lambda = 100$ & $(4.54\pm 0.48) \cdot 10^{-2}$ achieved for $\lambda = 100$ \\ 
 \hline
 
\end{tabular}
\end{table}

We report the relative $L^2(\Omega)$ and $H^1(\Omega)$ errors obtained through using the proposed pre-training strategy in figure \ref{fig:pre-training} and the best results we obtained in table \ref{table:BestErrorspre-training}. We clearly see beneficial effects for both the accuracy and the variance of the errors for all three equations. In case of the disk, the relative error is almost reduced by a full magnitude. Also on the annulus, we see a drastic improvement of accuracy. For the square, the improvement in accuracy is smaller, however, the variance in the reported errors is decreased drastically as well. This is no surprise as the example of the square possesses an oscillating solution in the interior and we believe that the main error is due to this comparatively complicated solution. In the following, we list the advantages of our proposed pre-training strategy.
\begin{enumerate}
    \item [(i)] Improved accuracy and increased reliability as discussed above.
    \item[(ii)] Potential computational savings. Even though our method requires to pre-train the model we believe that the overall computational cost can be reduced. In fact, using larger learning rates is possible as the loss landscape is less rough for small values of $\lambda$. We have already exploited this in our experiment setup. Also in the main training, potentially larger learning rates can be used as the model is already in a reasonable state. We leave the fine-tuning of learning rate schedules for future research.
    \item[(iii)] Decreased sensitivity to the choice of penalization parameter. Particularly, the experiments on the disk and the annulus suggest that the accuracy of the method does not depend as strongly on the penalization strength as in the case without pre-training. This facilitates choosing a reasonable penalization strength.
\end{enumerate}
\subsection{Theoretical Motivation for Pre-Training}\label{sec:Motivationpre-training}
In this section we justify our pre-training and especially the shift by a constant value. Remember that the boundary penalty method with penalty parameter $\lambda$ corresponds to a Robin boundary value problem as explained in \eqref{eq:Robin}. For the convenience of the reader we repeat the equation  
\begin{equation*}
    \begin{split}
        -\Delta u & = f \quad \text{in } \Omega \\
        \partial_nu + 2\lambda u & = 0 \quad \text{on } \partial\Omega,
    \end{split}
\end{equation*}
and we denote its solution by $u_\lambda$. Heuristically, letting $\lambda\to\infty$ we recover the Dirichlet problem
\begin{equation*}
    \begin{split}
        -\Delta u & = f \quad \text{in } \Omega \\
        u & = 0 \quad \text{on } \partial\Omega,
    \end{split}
\end{equation*}
with solution $u_0$. Our motivation for optimizing among constants stems from the fact that the difference $v_\lambda \coloneqq u_0 - u_\lambda$ can be expanded in a suitable orthonormal basis $(e_j)_{j\in\mathbb{N}}$ -- a Steklov eigenbasis -- and is given by
\begin{equation}\label{eq:AbstractEigenExpansion}
    v_\lambda = \sum_{j=0}^\infty c_j e_j,
\end{equation}
where in fact the leading basis function is constant. Thus, our suggested shifting by a constant eliminates this leading term and brings the result of the pre-training closer to the desired solution of the Dirichlet problem. The details of the expansion \eqref{eq:AbstractEigenExpansion} will be provided in the appendix. 
To determine the optimal constant by which a pre-trained model should be shifted, a simple analytical argument suffices.
\begin{proposition}
    Let $\lambda > 0$ and $L: \mathbb{R}^n \times \mathbb{R}^n \to \mathbb{R}$ a Lagrangian and $u_0:\mathbb{R}^n \to \mathbb{R}$ an admissible function, then the energy
    \[
        E_{\lambda}[u] \coloneqq \int_\Omega L(x, \nabla u(x)) \;\mathrm{d}x - \int_\Omega f(x) u(x) \;\mathrm{d}x + \lambda \int_{\partial \Omega} |u(x)|^2 \;\mathrm{d}x
    \]
    is minimized among all translations $u_t \coloneqq u_0 + t$ for $t\in \mathbb{R}$ if
    \[
        t = \tilde{t} \coloneqq \frac{1}{2\lambda|\partial \Omega|} \int_\Omega f(x) \;\mathrm{d}x - \frac{1}{|\partial \Omega|} \int_{\partial \Omega} u(x) \;\mathrm{d}x.
    \]
%    Moreover, it holds
%    \[
%        E_\lambda[u_{\tilde{t}}] = E_\lambda[u_0] - \lambda |\partial \Omega| \tilde{t}^2.
%    \]
    %\todo[inline]{This suggests also another training strategy, we can correct after each epoch?
    
    %We can also formulate this, if the Lagrangian is a quadratic polynomial in $u$, i.e., $L(x, u, \nabla u) = L_0(x, \nabla u) + L_1(x, \nabla u)u + L_2(x, \nabla u)u^2$ which captures some more reasonable physical models. But is this really necessary?
    
    %Moreover, I believe that if $L$ is quadratic in $\nabla u$ then we can also perform a linear correction instead of merely translations.}
\end{proposition}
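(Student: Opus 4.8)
The plan is to reduce the statement to minimizing an explicit one-variable quadratic. Write $g(t) \coloneqq E_\lambda[u_0 + t]$ and expand each of the three contributions. Since $t$ is constant, $\nabla(u_0 + t) = \nabla u_0$, so the Lagrangian term $\int_\Omega L(x,\nabla u_0)\,\mathrm dx$ does not depend on $t$ and may be absorbed into a $t$-independent constant. The source term contributes $-\int_\Omega f(x) u_0(x)\,\mathrm dx - t\int_\Omega f(x)\,\mathrm dx$, and expanding the square in the boundary term gives $\lambda\int_{\partial\Omega} u_0^2\,\mathrm dx + 2\lambda t\int_{\partial\Omega} u_0\,\mathrm dx + \lambda t^2 |\partial\Omega|$.

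Collecting the $t$-dependent pieces yields
\[
    g(t) = \lambda|\partial\Omega|\, t^2 + \Bigl(2\lambda\int_{\partial\Omega} u_0\,\mathrm dx - \int_\Omega f\,\mathrm dx\Bigr) t + \mathrm{const}.
\]
This is a quadratic polynomial in $t$ whose leading coefficient $\lambda|\partial\Omega|$ is strictly positive, since $\lambda > 0$ and $|\partial\Omega| > 0$; hence $g$ is strictly convex and attains its unique global minimum at the point where $g'(t) = 0$. Solving $2\lambda|\partial\Omega|\, t + 2\lambda\int_{\partial\Omega} u_0\,\mathrm dx - \int_\Omega f\,\mathrm dx = 0$ gives precisely
\[
    t = \frac{1}{2\lambda|\partial\Omega|}\int_\Omega f(x)\,\mathrm dx - \frac{1}{|\partial\Omega|}\int_{\partial\Omega} u_0(x)\,\mathrm dx = \tilde t,
\]
as claimed.

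The only points that merit a remark are that each competitor $u_t = u_0 + t$ is again admissible — translation by a constant preserves membership in $H^1(\Omega)$ (respectively in whatever regularity class "admissible" designates) — and that the integrals above are finite, which follows from $u_0 \in H^1(\Omega)$, the trace theorem, and the integrability of $f$. There is no genuine obstacle here: the argument is a direct computation, and the essential mechanism is that the Lagrangian (gradient) term is translation-invariant, so that along the line $\{u_0 + t : t \in \mathbb{R}\}$ the competition between the source term and the boundary penalty is governed by a strictly convex quadratic in the single scalar variable $t$.
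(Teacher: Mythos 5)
Your proof is correct and follows essentially the same route as the paper's: both reduce the problem to a one-dimensional optimization in $t$ and solve the first-order condition $\frac{\mathrm{d}}{\mathrm{d}t}E_\lambda[u_0+t]=0$, using that the gradient term is translation-invariant. Your version is in fact slightly more complete, since you exhibit the quadratic explicitly and justify via strict convexity that the critical point is the global minimum, where the paper only remarks that this follows ``due to the structure of the energy.''
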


\begin{proof}
    The translation $u_t$ for $t\in \mathbb{R}$ that minimizes the energy $E_{\lambda}$ has to satisfy
    \[
        \frac{\mathrm{d}}{\mathrm{d}t} E_{\lambda}[u + t] = 0.
    \]
    Hence, it has to hold
    \[
        0 = -\int_{\Omega} f(x) u(x) \;\mathrm{d}x + 2\lambda \int_{\partial \Omega} u(x) + t\;\mathrm{d}x
    \]
    which, after rearranging, implies the stated equation for the translation $\tilde{t} \in \mathbb{R}$. Due to the structure of the energy this has to be a minimizer.
    
%    Futhermore, we have
%    \begin{align*}
%        E_{\lambda}[u_{\tilde{t}}] 
%        &= E_{\lambda}[u_0] - \tilde{t} \int_{\Omega} f(x)\;\mathrm{d}x + 2 \lambda \tilde{t} \int_{\partial \Omega} u_0(x) \;\mathrm{d}x + \lambda t^2 |\partial \Omega| \\
%        &= E_{\lambda}[u_0] + \lambda |\partial \Omega| \tilde{t}^2 - 2\lambda|\partial \Omega| \tilde{t} \underset{= \tilde{t}}{\underbrace{\left( \frac{1}{2\lambda|\partial \Omega|} \int_{\Omega} f(x) \;\mathrm{d}x - \frac{1}{|\partial \Omega|} \int_{\Omega} u_0(x) \;\mathrm{d}x \right)}} \\
%        &= E_{\lambda}[u_0] - \lambda |\partial \Omega| \tilde{t}^2.
%    \end{align*}
\end{proof}
\begin{remark}
The above motivation is not limited to the Laplace operator. In fact, also for the class of self-adjoint elliptic equations the Steklov theory holds and an analogue argument can be made. Note also, that the derivation of the optimal shifting applies to more general elliptic operators as well.
\end{remark}

\section{Exact Resolution of Boundary Values}\label{sec:exact}
We encode the zero boundary conditions directly into the network architecture using a smooth distance function to the boundary as described in section \ref{sec:NumericalMethods}. For every domain we use two different distance functions and observe that the choice of this function has a considerable influence on the accuracy of the method. We conclude that this approach has the potential to be the most accurate but introduces a possibly hard to obtain and difficult to choose hyperparameter -- the distance function.
\subsection{Experiment Setup}
We consider again the three equations introduced in section \ref{sec:NumericalMethods}. For each domain we use two different smooth distance functions. One is based on trigonometric functions and the other is polynomial. Especially for the experiments on the disk and the annulus the trigonometric functions are chosen to prevent a too strong similarity with the solutions of the boundary value problems. More precisely we use
\begin{gather*}
    d^D_{\text{trig}}(r) = \cos\left(\frac{\pi r}{2}\right)\quad\text{and}\quad d^D_{\text{pol}}(r) = r^2 - 1
\end{gather*}
for the disk,
\begin{gather*}
    d^S_{\text{trig}}(x,y) = \sin(\pi x)\sin(\pi y)\quad\text{and}\quad d^S_{\text{pol}}(x,y) = x(x-1)y(y-1)
\end{gather*}
for the square and
\begin{gather*}
    d^A_{\text{trig}}(r) = \sin(\pi r)\quad\text{and}\quad d^A_{\text{pol}}(r) = -(r - 1)(r - 2)
\end{gather*}
for the annulus, with $r = \sqrt{x^2+y^2}$. The optimization process is carried out using $10000$ iterations of the Adam optimizer with learning rate $0.001$ and every experiment (i.e., every domain-distance function pair) is repeated $25$ times to capture the effect of the random initialization. The remaining hyperparameters are chosen as in the other experiments, see section \ref{sec:NumericalMethods} for an overview.

\subsection{Discussion}
We report the relative $L^2(\Omega)$ and relative $H^1(\Omega)$ errors in table \ref{table:DistanceFunctions}. Comparing with the results of the previous experiments, see table \ref{table:BestErrorsNaive} and table \ref{table:BestErrorspre-training}, we see that for both choices of distance functions this method is the most accurate for all three domains. However, the choice of the specific distance function has a drastic influence on the accuracy. In the case of the annulus the polynomial function performs by half a magnitude better and in the case of the disk by a factor of roughly three. We believe this phenomenon is due to the similarity of the polynomial distance function to the solutions of the PDEs. In the case of the disk, the solution is polynomial and in the case of the annulus, it is a sum of a polynomial and a logarithm, see section \ref{sec:NumericalMethods}.

Finally, the conducted experiments with exact boundary conditions suggest that it is reasonable to use such a construction when available. However, the choice of the distance function has a significant influence on the accuracy of the method. Hence, without a priori knowledge of the solution making a good choice remains an open problem. Furthermore, on complex domains it is difficult and computationally expensive to produce such a distance function and can in general only be done approximately. We refer to \cite{berg2018unified}.
\begin{table}[ht]
\centering
\caption{Relative Errors for different choices of distance functions}
\label{table:DistanceFunctions}
 \begin{tabular}{| c | c | c | c | c |} 
 \hline
 %\rowcolor{lightgray}
 Domain & $L^2$ error with $d_{\text{trig}}$ & $H^1$ error with $d_{\text{trig}} $ & $L^2$ error with $d_{\text{pol}}$ & $H^1$ error with $d_{\text{pol}}$  \\ [0.5ex] 
 \hline\hline
 Disk & $(0.37 \pm0.58) \cdot 10^{-2}$ & $(0.7\pm 0.55) \cdot 10^{-2}$ & $(0.14 \pm 0.17)\cdot 10^{-2}$ & $(0.21\pm 0.18) \cdot 10^{-2} $ \\ 
 \hline
 Annulus & $(0.92 \pm 1.15) \cdot 10^{-2}$ & $(1.67\pm 0.97) \cdot 10^{-2}$ & $(0.18 \pm 0.13)\cdot 10^{-2}$ & $(0.29\pm 0.11) \cdot 10^{-2} $ \\ 
 \hline
 Square & $(1.09 \pm 1.1) \cdot 10^{-2}$ & $(1.4\pm 0.4) \cdot 10^{-2}$ & $(0.64 \pm 0.04)\cdot 10^{-2}$ & $(1.78\pm 0.29) \cdot 10^{-2} $ \\ 
 \hline
 
\end{tabular}
\end{table}

\section{Conclusions}\label{sec:conclusions}
We discussed three different approaches to practically realize homogeneous Dirichlet boundary conditions in the Deep Ritz Method for elliptic equations. First, the na\"ive ansatz of using the boundary penalty method for a fixed penalization strength. Second, we proposed a novel strategy where the model is pre-trained using a small penalization parameter before conducting the main training for the target penalization. Finally, we enforced the boundary conditions directly into the ansatz functions by using a smooth distance function. 

We found that the na\"ive approach suffers severely from the instability introduced by large penalization parameters which complicate the training process. This leads to imprecise approximation of the solution and a high variance of the errors. To alleviate this problem, the use of a smooth distance function seems beneficial, especially if one can encode intuition of the solution into the design of the distance function. However, this is seldom possible for complicated domains. In this case, the proposed pre-training strategy can be used without extra computational or algorithmic cost. It is shown that this approach improves both accuracy and reduces the variance of the errors.

Future research directions are the fine-tuning of the proposed pre-training method, such as employing learning rate schedules and the overall reduction of computational cost. Futhermore, we propose to investigate how the method performs on more complex problems, especially paired with more elaborate training algorithms, for example the ones proposed in \cite{cyr2020robust}. 
\section*{Acknowledgement}
LC gratefully acknowledges support from the Fonds National de la Recherche, Luxembourg (AFR Grant 13502370). MZ gratefully acknowledges support from BMBF within the e:Med program in the SyMBoD consortium (grant number 01ZX1910C). The authors thank Johannes M\"uller for valuable discussions and suggestions.

\section*{Declaration of Interest}
Declarations of interest: none.

\bibliographystyle{apalike}
\bibliography{references}

\begin{thebibliography}{}

\bibitem[Abadi et~al., 2016]{abadi2016tensorflow}
Abadi, M., Barham, P., Chen, J., Chen, Z., Davis, A., Dean, J., Devin, M.,
  Ghemawat, S., Irving, G., Isard, M., et~al. (2016).
\newblock Tensorflow: A system for large-scale machine learning.
\newblock In {\em 12th $\{$USENIX$\}$ symposium on operating systems design and
  implementation ($\{$OSDI$\}$ 16)}, pages 265--283.

\bibitem[Auchmuty, 2005]{auchmuty2005steklov}
Auchmuty, G. (2005).
\newblock Steklov eigenproblems and the representation of solutions of elliptic
  boundary value problems.
\newblock {\em Numerical Functional Analysis and Optimization},
  25(3-4):321--348.

\bibitem[Babu{\v{s}}ka, 1973]{babuvska1973finite}
Babu{\v{s}}ka, I. (1973).
\newblock The finite element method with penalty.
\newblock {\em Mathematics of computation}, 27(122):221--228.

\bibitem[Berg and Nystr{\"o}m, 2018]{berg2018unified}
Berg, J. and Nystr{\"o}m, K. (2018).
\newblock {A unified deep artificial Neural Network Approach to Partial
  Differential Equations in complex Geometries}.
\newblock {\em Neurocomputing}, 317:28--41.

\bibitem[Cyr et~al., 2020]{cyr2020robust}
Cyr, E.~C., Gulian, M.~A., Patel, R.~G., Perego, M., and Trask, N.~A. (2020).
\newblock {Robust Training and Initialization of Deep Neural Networks: An
  adaptive Basis Viewpoint}.
\newblock In {\em Mathematical and Scientific Machine Learning}, pages
  512--536. PMLR.

\bibitem[Dissanayake and Phan-Thien, 1994]{dissanayake1994neural}
Dissanayake, M. and Phan-Thien, N. (1994).
\newblock {Neural-Network-based Approximations for solving Partial Differential
  Equations}.
\newblock {\em Communications in Numerical Methods in Engineering},
  10(3):195--201.

\bibitem[E and Yu, 2018]{weinan2018deep}
E, W. and Yu, B. (2018).
\newblock {The Deep Ritz method: a Deep Learning-based numerical Algorithm for
  solving Variational Problems}.
\newblock {\em Communications in Mathematics and Statistics}, 6(1):1--12.

\bibitem[Grisvard, 2011]{grisvard2011elliptic}
Grisvard, P. (2011).
\newblock {\em {Elliptic Problems in nonsmooth Domains}}.
\newblock SIAM.

\bibitem[Kova{\v{r}}{\'\i}k, 2014]{kovavrik2014lowest}
Kova{\v{r}}{\'\i}k, H. (2014).
\newblock On the lowest eigenvalue of laplace operators with mixed boundary
  conditions.
\newblock {\em The Journal of Geometric Analysis}, 24(3):1509--1525.

\bibitem[Lagaris et~al., 1998]{lagaris1998artificial}
Lagaris, I.~E., Likas, A., and Fotiadis, D.~I. (1998).
\newblock {Artificial Neural Networks for solving Ordinary and Partial
  Differential Equations}.
\newblock {\em IEEE transactions on neural networks}, 9(5):987--1000.

\bibitem[Lee et~al., 2021]{lee2021partition}
Lee, K., Trask, N.~A., Patel, R.~G., Gulian, M.~A., and Cyr, E.~C. (2021).
\newblock Partition of unity networks: deep hp-approximation.
\newblock {\em arXiv preprint arXiv:2101.11256}.

\bibitem[Li et~al., 2020]{li2020fourier}
Li, Z., Kovachki, N., Azizzadenesheli, K., Liu, B., Bhattacharya, K., Stuart,
  A., and Anandkumar, A. (2020).
\newblock Fourier neural operator for parametric partial differential
  equations.
\newblock {\em arXiv preprint arXiv:2010.08895}.

\bibitem[M{\"u}ller and Zeinhofer, 2021]{muller2021error}
M{\"u}ller, J. and Zeinhofer, M. (2021).
\newblock {Error Estimates for the Variational Training of Neural Networks with
  Boundary Penalty}.
\newblock {\em arXiv preprint arXiv:2103.01007}.

\bibitem[Paszke et~al., 2019]{paszke2019pytorch}
Paszke, A., Gross, S., Massa, F., Lerer, A., Bradbury, J., Chanan, G., Killeen,
  T., Lin, Z., Gimelshein, N., Antiga, L., et~al. (2019).
\newblock Pytorch: An imperative style, high-performance deep learning library.
\newblock {\em arXiv preprint arXiv:1912.01703}.

\bibitem[Raissi and Karniadakis, 2018]{raissi2018hidden}
Raissi, M. and Karniadakis, G.~E. (2018).
\newblock Hidden physics models: Machine learning of nonlinear partial
  differential equations.
\newblock {\em Journal of Computational Physics}, 357:125--141.

\bibitem[Sirignano and Spiliopoulos, 2018]{sirignano2018dgm}
Sirignano, J. and Spiliopoulos, K. (2018).
\newblock {DGM: A Deep Learning Algorithm for solving Partial Differential
  Equations}.
\newblock {\em Journal of computational physics}, 375:1339--1364.

\bibitem[van~der Meer et~al., 2020]{van2020optimally}
van~der Meer, R., Oosterlee, C., and Borovykh, A. (2020).
\newblock {Optimally weighted Loss Functions for solving PDEs with Neural
  Networks}.
\newblock {\em arXiv preprint arXiv:2002.06269}.

\bibitem[Wang et~al., 2020]{wang2020understanding}
Wang, S., Teng, Y., and Perdikaris, P. (2020).
\newblock Understanding and mitigating gradient pathologies in physics-informed
  neural networks.
\newblock {\em arXiv preprint arXiv:2001.04536}.

\end{thebibliography}
\appendix
\section{Derivation of Error Estimates on the Disk}\label{appendix:ErrorEstimateDisk}
We provide the details for the specialized error estimates on the disk on which we base our discussion in section \ref{sec:naive}.
\begin{lemma}
Let $\Omega = B_1(0)\subset\mathbb{R}^2$ be the unit disk. Fix $\lambda>0$ and let $\Theta$ denote a parameter set of some neural network architecture with $u_\theta\in H^1(\Omega)$ for all $\theta\in\Theta$. We denote by $u_0$ the solution to \eqref{eq:Dirichlet}, by $u_\lambda$ the solution to \eqref{eq:Robin} and we let $\theta$ be arbitrary. Then it holds
\begin{equation*}
    \lVert u_\theta - u_0 \rVert_{H^1(\Omega)} \leq \sqrt{ \frac{8+10\lambda}{\lambda}\delta_\lambda + \frac{4+5\lambda}{\lambda}\inf_{\tilde\theta\in\Theta}\lVert u_{\tilde\theta}-u_\lambda \rVert_\lambda^2 } + \frac{\pi}{4\lambda}
\end{equation*}
where, using $\mathcal{L}_\lambda$ as in \eqref{eq:LossFunction}, we have
\begin{gather*}
    \delta_\lambda = \mathcal{L}_\lambda(\theta) - \inf_{\tilde\theta\in\Theta}\mathcal{L}_\lambda(\tilde\theta)\quad\text{and}\quad \lVert \cdot \rVert_\lambda^2 = \lVert \nabla\cdot\rVert_{L^2(\Omega)}^2 + \lambda \lVert \cdot \rVert_{L^2(\partial\Omega)}^2
\end{gather*}
\end{lemma}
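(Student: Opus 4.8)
The plan is to specialize the general a posteriori estimate \eqref{eq:ErrorEstimate} to $\Omega = B_1(0)$ by making its two domain–dependent ingredients explicit: the coercivity constant $\alpha_\lambda = \inf_{\lVert u\rVert_{H^1(\Omega)}=1}\lVert u\rVert_\lambda^2$ and the Robin--Dirichlet discrepancy $\lVert u_\lambda - u_0\rVert_{H^1(\Omega)}$ (the quantity written abstractly as $C(\Omega)\lVert f\rVert_{L^2(\Omega)}\lambda^{-1}$). Concretely, once we show $\alpha_\lambda^{-1}\le \tfrac{4+5\lambda}{\lambda}$, equivalently $\tfrac{2}{\alpha_\lambda}\le\tfrac{8+10\lambda}{\lambda}$, and $\lVert u_\lambda - u_0\rVert_{H^1(B_1)}\le\tfrac{\pi}{4\lambda}$, substituting into \eqref{eq:ErrorEstimate} yields the claim verbatim. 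The triangle split $\lVert u_\theta - u_0\rVert_{H^1}\le\lVert u_\theta - u_\lambda\rVert_{H^1}+\lVert u_\lambda - u_0\rVert_{H^1}$ and the bound on the first summand by $\sqrt{\tfrac{2\delta_\lambda}{\alpha_\lambda}+\tfrac{1}{\alpha_\lambda}\inf_{\tilde\theta}\lVert u_{\tilde\theta}-u_\lambda\rVert_\lambda^2}$ are precisely \eqref{eq:ErrorEstimate} (resting on the quadratic structure of $E_\lambda$ and a C\'ea-type norm equivalence), so nothing new is needed there.

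The discrepancy term is immediate from the explicit solutions recorded in Section \ref{sec:naive}: with $f\equiv 1$ on the disk one has $u_0(x) = -\tfrac14|x|^2 + \tfrac14$ and $u_\lambda = u_D^\lambda(x) = -\tfrac14|x|^2 + \tfrac14 + \tfrac1{4\lambda}$, so $u_\lambda - u_0\equiv \tfrac1{4\lambda}$ is constant with vanishing gradient. Hence $\lVert u_\lambda - u_0\rVert_{H^1(B_1)} = \tfrac1{4\lambda}\,|B_1(0)|^{1/2} = \tfrac{\sqrt\pi}{4\lambda}\le\tfrac{\pi}{4\lambda}$.

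The heart of the argument is the lower bound on $\alpha_\lambda$, which reduces to a Poincar\'e--trace inequality on the disk with explicit constants, i.e. an estimate $\lVert u\rVert_{L^2(B_1)}^2\le C_1\lVert\nabla u\rVert_{L^2(B_1)}^2 + C_2\lVert u\rVert_{L^2(\partial B_1)}^2$ for all $u\in H^1(B_1)$. I would obtain this by passing to polar coordinates and expanding in the angular Fourier series $u(r,\theta)=\sum_{k\in\mathbb{Z}}u_k(r)e^{ik\theta}$, which simultaneously diagonalizes $\lVert u\rVert_{L^2(B_1)}^2$, $\lVert\nabla u\rVert_{L^2(B_1)}^2$ and $\lVert u\rVert_{L^2(\partial B_1)}^2$. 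For $k\neq 0$ the elementary fact $k^2/r^2\ge 1$ on $(0,1]$ shows each such mode is already controlled by its own gradient energy and needs no boundary contribution; for the radial mode $k=0$ one writes $u_0(r)=u_0(1)-\int_r^1 u_0'(s)\,ds$ and applies Cauchy--Schwarz against the weight, using $\int_0^1(-\ln r)\,r\,dr=\tfrac14$, to get $\int_0^1 u_0(r)^2 r\,dr\le u_0(1)^2 + \tfrac12\int_0^1 u_0'(r)^2 r\,dr$. Re-summing the modes gives the Poincar\'e--trace inequality with explicit $C_1,C_2$. Then, from $\lVert u\rVert_{H^1(B_1)}^2 = \lVert u\rVert_{L^2}^2 + \lVert\nabla u\rVert^2\le(1+C_1)\lVert\nabla u\rVert^2 + C_2\lVert u\rVert_{L^2(\partial B_1)}^2$, comparing termwise with $\lVert u\rVert_\lambda^2 = \lVert\nabla u\rVert^2 + \lambda\lVert u\rVert_{L^2(\partial B_1)}^2$ yields $\alpha_\lambda\ge\min((1+C_1)^{-1}, C_2^{-1}\lambda)$, and a one-line check shows this is $\ge\tfrac{\lambda}{4+5\lambda}$ for every $\lambda>0$ with the constants produced above.

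The only genuinely delicate point is carrying the constants of the Poincar\'e--trace inequality through precisely enough to land on the stated closed form $\tfrac{4+5\lambda}{\lambda}$; the angular decomposition collapses this to the single one-dimensional radial estimate, after which everything is routine algebra together with the triangle inequality. I expect no conceptual obstacle beyond that bookkeeping.
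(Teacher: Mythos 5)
Your proposal is correct, and it reaches the stated constants, but the route to the coercivity bound differs from the paper's. Both arguments start identically: invoke the estimate \eqref{eq:ErrorEstimate} of \cite{muller2021error}, observe from the explicit formulas that $u_\lambda-u_0\equiv\frac{1}{4\lambda}$ on the disk, and reduce everything to a lower bound on $\alpha_\lambda$. The paper obtains that bound by relating $\alpha_\lambda$ to the Friedrichs constant, $\alpha_\lambda\ge(C_{F,\lambda}+1)^{-1}$, identifying $C_{F,\lambda}^{-1}$ with the first Robin eigenvalue $\mu_1^\lambda$ via the Rayleigh quotient, and then citing the two-sided eigenvalue estimates of \cite{kovavrik2014lowest} for convex domains, which give $\mu_1^\lambda\ge\frac{\lambda}{4+4\lambda}$ and hence exactly $\alpha_\lambda\ge\frac{\lambda}{4+5\lambda}$; this explains where the closed form $\frac{4+5\lambda}{\lambda}$ comes from and transfers verbatim to any convex domain. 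You instead prove a Poincar\'e--trace inequality on the disk from scratch by separation of variables: your mode computation is sound (the nonradial modes are absorbed by the angular gradient energy since $k^2/r^2\ge1$, and the radial mode yields $\int_0^1u_0^2r\,dr\le u_0(1)^2+\frac12\int_0^1(u_0')^2r\,dr$ using $\int_0^1(-\ln r)r\,dr=\frac14$), giving $C_1=C_2=1$ and hence $\alpha_\lambda\ge\min(\frac12,\lambda)$, which indeed dominates $\frac{\lambda}{4+5\lambda}$ for all $\lambda>0$. Your approach is self-contained and in fact yields a sharper coercivity constant, at the price of being tied to the disk geometry and of arriving at $\frac{4+5\lambda}{\lambda}$ only as a deliberately weakened consequence; the paper's approach is a one-line citation that generalizes to convex domains. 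One further point in your favor: the $H^1$ norm of the constant $\frac{1}{4\lambda}$ on the unit disk is $\frac{\sqrt\pi}{4\lambda}$, not $\frac{\pi}{4\lambda}$ as the paper asserts with an equality sign; your inequality $\frac{\sqrt\pi}{4\lambda}\le\frac{\pi}{4\lambda}$ is the correct way to match the stated bound.
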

\begin{proof}
    The error estimates derived by \cite{muller2021error} yield
    \begin{equation*}
    \lVert u_\theta - u_0 \rVert_{H^1(\Omega)} 
    \leq
    \sqrt{ \frac{2\delta_\lambda}{\alpha_\lambda} + \frac{1}{\alpha_\lambda} \inf_{\tilde\theta\in\Theta}\lVert u_{\tilde\theta}-u_\lambda \rVert_\lambda^2 } + \lVert u_\lambda - u_0 \rVert_{H^1(\Omega)}.
\end{equation*}
Inspecting the solution formulas for $u_0$ and $u_\lambda$ in section \ref{sec:NumericalMethods} reveals that $u_\lambda - u_0\equiv \frac{1}{4\lambda}$, hence
\[ \lVert u_\lambda - u_0 \rVert_{H^1(\Omega)} = \frac{\pi}{4\lambda}. \]
It remains to estimate $\alpha_\lambda$, the coercivity constant of the bilinear form
\begin{equation*}
    a_\lambda:H^1(\Omega)\times H^1(\Omega)\to\mathbb{R},\quad a_\lambda(u,v) = \int_\Omega\nabla u\nabla v\mathrm dx + \lambda\int_{\partial\Omega}uv\mathrm ds.
\end{equation*}
In fact, $\alpha_\lambda$ may be estimated using the Friedrich's constant $C_{F,\lambda}$ \[\alpha_\lambda \geq \left( C_{F,\lambda} +1 \right)^{-1},\] where $C_{F,\lambda}$ is the smallest constant satisfying
\begin{equation*}
    \int_\Omega u^2\mathrm dx \leq C_{F,\lambda}\left( \int_\Omega |\nabla u|^2\mathrm dx + \lambda \int_{\partial\Omega}u^2\mathrm ds \right)\quad \text{for all }u\in H^1(\Omega).
\end{equation*}
Furthermore, $C_{F,\lambda}$ can be connected to the first eigenvalue of the Robin Laplacian which we denote by $\mu_1^\lambda$. More precisely it holds via the Rayleigh quotient characterization
\begin{equation*}
    \mu_1^\lambda = \min_{u\in H^1(\Omega)\setminus\{0\}} \frac{\int_\Omega|\nabla u|^2\mathrm dx + \lambda \int_{\partial\Omega}u^2\mathrm ds}{\int_\Omega u^2\mathrm dx} = C_{F,\lambda}^{-1}.
\end{equation*}
Finally, in the case of convex domains, two-sided estimates for $\mu_1^\lambda$ are known, see \cite{kovavrik2014lowest}. In particular we get for the disk (as a general convex domain)
\[ \mu_1^\lambda \geq \frac{\lambda}{4+4\lambda} \]
which concludes the proof.
\end{proof}
\section{Steklov Eigenvalue Theory}
We will now provide the details of the expansion \eqref{eq:AbstractEigenExpansion} and repeat briefly the prerequisites from the Steklov eigenvalue theory. For the proofs of our basic results we refer to \cite{muller2021error} and for a more advanced and comprehensive introduction to the Steklov Eigenvalue Theory the reader is referred to \cite{auchmuty2005steklov}. The basic definition in case of the Laplacian is the following.  
\begin{definition}
Let $\Omega\subset\mathbb{R}^d$ be open and bounded. The Steklov eigenvalue problem for the Laplacian consists of finding $(\mu,w)\in \mathbb{R}\times H^1(\Omega)\setminus\{0\}$ such that
\begin{equation}
    \int_\Omega \nabla w\nabla\varphi\mathrm dx = \mu \int_{\partial\Omega} w \varphi \mathrm ds \quad \text{for all }\varphi\in H^1(\Omega).
\end{equation}
We call \(\mu\) a \emph{Steklov eigenvalue} and \(w\) a corresponding \emph{Steklov eigenfunction}.
\end{definition}
It is clear from the definition that constant functions are eigenfunctions to the eigenvalue zero. The main reason for considering Steklov eigenfunctions is that they provide an orthonormal basis for the weakly harmonic functions $\mathcal{H}(\Omega) = \{ u\in H^1(\Omega)\mid -\Delta u = 0 \ \text{in }H^1_0(\Omega)^* \}$.
\begin{theorem}[Steklov spectral theorem]\label{steklovSpectralTheorem}\label{thm:SteklovSpectral}
    Let $\Omega$ be a bounded Lipschitz domain. Then there exists a non decreasing sequence $(\mu_j)_{j \in \mathbb{N}} \subseteq [0,\infty)$ with $\mu_j \to \infty$ and $(e_j)_{j \in \mathbb{N}} \subseteq \mathcal{H}(\Omega)$ such that $\mu_j$ is a Steklov eigenvalue with eigenfunction $e_j$. Further, $(e_j)_{j\in\mathbb{N}}$ is a complete orthonormal system in \(\mathcal H(\Omega)\) with respect to the inner product \(a(u,v) = \int_\Omega \nabla u\nabla v\mathrm dx + \int_{\partial\Omega} u v\mathrm ds\) on $H^1(\Omega)$.
\end{theorem}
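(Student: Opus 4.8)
The plan is to reduce the Steklov problem to an eigenvalue problem for a compact, self-adjoint, positive operator on the Hilbert space $(\mathcal H(\Omega),a)$ and then quote the spectral theorem for such operators. As a first step I would verify that $a$ is an inner product on $H^1(\Omega)$ whose norm is equivalent to $\lVert\cdot\rVert_{H^1(\Omega)}$: symmetry and bilinearity are immediate, and the coercivity $a(u,u)\gtrsim\lVert u\rVert_{H^1(\Omega)}^2$ is a Friedrichs-type inequality on bounded Lipschitz domains (the $\lambda=1$ instance of the inequality used in Appendix~\ref{appendix:ErrorEstimateDisk}). I would then record that $H^1_0(\Omega)$ is $a$-closed and that $\mathcal H(\Omega)$ is exactly its $a$-orthogonal complement: for $\varphi_0\in H^1_0(\Omega)$ the boundary term vanishes, so $a(u,\varphi_0)=\int_\Omega\nabla u\cdot\nabla\varphi_0$, which is zero for all such $\varphi_0$ precisely when $u$ is weakly harmonic. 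Hence $(\mathcal H(\Omega),a)$ is a Hilbert space, $H^1(\Omega)=\mathcal H(\Omega)\oplus H^1_0(\Omega)$ $a$-orthogonally, and $\mathcal H(\Omega)\cap H^1_0(\Omega)=\{0\}$ (a weakly harmonic function with zero trace has vanishing gradient, hence is zero). Finally $\mathcal H(\Omega)$ is infinite-dimensional — for instance the trace map is an isomorphism onto the infinite-dimensional space $H^{1/2}(\partial\Omega)$ — which is what forces $\mu_j\to\infty$.

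Next I would introduce the bounded symmetric bilinear form $b(u,v)=\int_{\partial\Omega}uv\,\mathrm ds$, bounded on $(\mathcal H(\Omega),a)$ by continuity of the trace $H^1(\Omega)\to L^2(\partial\Omega)$, and use the Riesz representation theorem to produce a bounded operator $T\colon\mathcal H(\Omega)\to\mathcal H(\Omega)$ with $a(Tu,v)=b(u,v)$ for all $u,v\in\mathcal H(\Omega)$. One checks that $T$ is self-adjoint ($a(Tu,v)=b(u,v)=b(v,u)=a(u,Tv)$), positive ($a(Tu,u)=\int_{\partial\Omega}u^2\ge0$), and injective (if $b(u,\cdot)\equiv0$ on $\mathcal H(\Omega)$ then $u$ has zero trace, hence $u=0$ by the previous paragraph). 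The main obstacle is the \emph{compactness} of $T$: given an $a$-bounded, hence $H^1$-bounded, sequence $u_n$, the compactness of the trace embedding $H^1(\Omega)\hookrightarrow L^2(\partial\Omega)$ on bounded Lipschitz domains extracts a subsequence whose traces converge in $L^2(\partial\Omega)$, and testing $a(Tu_n-Tu_m,\cdot)$ against $Tu_n-Tu_m$ shows $(Tu_n)$ is $a$-Cauchy. This trace-compactness is the one genuinely non-elementary input; it follows from Rellich's theorem together with a localization/flattening of the Lipschitz boundary, or from the compact embedding $H^1(\Omega)\hookrightarrow H^s(\Omega)$ composed with the continuous trace $H^s(\Omega)\to L^2(\partial\Omega)$ for $s\in(1/2,1)$.

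Finally I would apply the spectral theorem for compact self-adjoint positive operators to $T$: this yields a non-increasing sequence of positive eigenvalues $\sigma_j\downarrow0$ and an $a$-orthonormal basis $(e_j)_{j\in\mathbb N}$ of $\mathcal H(\Omega)$ with $Te_j=\sigma_je_j$, completeness being automatic from injectivity of $T$. Since $b(u,u)\le a(u,u)$ with equality exactly for (locally) constant $u$, one has $\sigma_0=1$ with constant eigenfunction. Unwinding $a(\sigma_je_j,\varphi)=b(e_j,\varphi)$ gives $\int_\Omega\nabla e_j\cdot\nabla\varphi=\mu_j\int_{\partial\Omega}e_j\varphi$ for all $\varphi\in\mathcal H(\Omega)$, where $\mu_j:=\sigma_j^{-1}-1$; because $\sigma_j\in(0,1]$ is non-increasing with $\sigma_j\to0$, the sequence $(\mu_j)$ is non-decreasing in $[0,\infty)$ with $\mu_j\to\infty$ and $\mu_0=0$. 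To upgrade the Steklov identity to all $\varphi\in H^1(\Omega)$, decompose $\varphi=\varphi_{\mathcal H}+\varphi_0$ along $\mathcal H(\Omega)\oplus H^1_0(\Omega)$: harmonicity of $e_j$ kills $\int_\Omega\nabla e_j\cdot\nabla\varphi_0$ and the zero trace of $\varphi_0$ kills $\int_{\partial\Omega}e_j\varphi_0$, so the identity extends, proving each $e_j$ is a genuine Steklov eigenfunction. Everything apart from the trace-compactness is bookkeeping around the Riesz representation theorem and the spectral theorem.
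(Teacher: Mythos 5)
Your argument is correct: the reduction to a compact, self-adjoint, positive, injective operator on $(\mathcal H(\Omega),a)$ via the boundary form $b$, with compactness supplied by the compact trace embedding $H^1(\Omega)\hookrightarrow L^2(\partial\Omega)$ and the eigenvalue relation $\mu_j=\sigma_j^{-1}-1$, is exactly the standard variational proof given in the sources the paper defers to (\cite{muller2021error}, \cite{auchmuty2005steklov}); the paper itself states the theorem without proof. The only points worth flagging are cosmetic: the claim that $\sigma=1$ is attained precisely by constants presupposes $\Omega$ connected (you already hedge with ``locally constant''), and the coercivity of $a$ is indeed the $\lambda=1$ Friedrichs inequality invoked in Appendix~\ref{appendix:ErrorEstimateDisk}.
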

This allows us to establish \eqref{eq:AbstractEigenExpansion} rigorously. 
\begin{lemma}
Assume that $f\in H^1(\Omega)^*$ and suppose $u_0$ satisfies a homogeneous Dirichlet problem as in \eqref{eq:Dirichlet} and $u_\lambda$ satisfies a homogeneous Robin problem as in \eqref{eq:Robin}, then $v_\lambda = u_0 - u_\lambda$ is a member of $\mathcal{H}(\Omega)$ and can be expanded in the form \eqref{eq:AbstractEigenExpansion}.
\end{lemma}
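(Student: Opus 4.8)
The plan is to first show that $v_\lambda = u_0 - u_\lambda$ is weakly harmonic, i.e.\ lies in $\mathcal{H}(\Omega)$, and then to obtain the expansion \eqref{eq:AbstractEigenExpansion} directly from the Steklov spectral theorem (Theorem~\ref{thm:SteklovSpectral}).

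First I would recall the weak formulations. Since $f\in H^1(\Omega)^*$, it restricts to an element of $H^1_0(\Omega)^*$, so the Dirichlet problem \eqref{eq:Dirichlet} has the weak solution $u_0\in H^1_0(\Omega)$ characterized by $\int_\Omega \nabla u_0\cdot\nabla\varphi\,\mathrm dx = f(\varphi)$ for all $\varphi\in H^1_0(\Omega)$, while the Robin problem \eqref{eq:Robin} has the weak solution $u_\lambda\in H^1(\Omega)$ characterized by $\int_\Omega \nabla u_\lambda\cdot\nabla\varphi\,\mathrm dx + 2\lambda\int_{\partial\Omega}u_\lambda\varphi\,\mathrm ds = f(\varphi)$ for all $\varphi\in H^1(\Omega)$. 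Testing the Robin identity against $\varphi\in H^1_0(\Omega)$, the boundary integral vanishes because $\varphi$ has zero trace, so $\int_\Omega\nabla u_\lambda\cdot\nabla\varphi\,\mathrm dx = f(\varphi)$ for every $\varphi\in H^1_0(\Omega)$. Subtracting this from the Dirichlet identity gives $\int_\Omega\nabla v_\lambda\cdot\nabla\varphi\,\mathrm dx = 0$ for all $\varphi\in H^1_0(\Omega)$, which is precisely the statement $-\Delta v_\lambda = 0$ in $H^1_0(\Omega)^*$; hence $v_\lambda\in\mathcal{H}(\Omega)$.

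Next, since $\mathcal{H}(\Omega)$ is a closed subspace of $H^1(\Omega)$ and, by Theorem~\ref{thm:SteklovSpectral}, the Steklov eigenfunctions $(e_j)_{j\in\mathbb{N}}$ form a complete orthonormal system of $\mathcal{H}(\Omega)$ with respect to $a(u,v)=\int_\Omega\nabla u\cdot\nabla v\,\mathrm dx+\int_{\partial\Omega}uv\,\mathrm ds$, we may write $v_\lambda=\sum_{j=0}^\infty c_j e_j$ with $c_j = a(v_\lambda,e_j)$, the series converging in the $a$-norm and hence in $H^1(\Omega)$ (on $\mathcal{H}(\Omega)$ the form $a$ induces a norm equivalent to $\|\cdot\|_{H^1(\Omega)}$ by a Friedrichs-type inequality for the Robin form, cf.\ the estimate on $\alpha_\lambda$ in Appendix~\ref{appendix:ErrorEstimateDisk}). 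This is exactly \eqref{eq:AbstractEigenExpansion}. Finally, $\mu_0=0$ is the smallest Steklov eigenvalue, and for a connected $\Omega$ its eigenspace consists precisely of the constant functions, so $e_0$ is the normalized constant; consequently the shift by a constant performed in the pre-training removes exactly the leading term $c_0 e_0$ of this expansion.

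The only point requiring genuine care is the observation that restricting the Robin weak formulation to test functions in $H^1_0(\Omega)$ annihilates the boundary penalty term and recovers the interior equation $-\Delta u_\lambda = f$; once this is in place, membership in $\mathcal{H}(\Omega)$ is immediate and the remainder is a direct application of Theorem~\ref{thm:SteklovSpectral}. A minor accompanying remark is that the hypothesis $f\in H^1(\Omega)^*$ (rather than merely $f\in H^1_0(\Omega)^*$) is what makes the Robin problem well posed, but it introduces no additional difficulty.
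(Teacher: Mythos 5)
Your proposal is correct and follows the same route the paper intends: the paper's proof is the single line ``this follows directly from Theorem~\ref{thm:SteklovSpectral}'', and you have simply supplied the omitted details — subtracting the Robin weak formulation (tested against $H^1_0(\Omega)$ functions, which kills the boundary term) from the Dirichlet one to see that $v_\lambda$ is weakly harmonic, then invoking the completeness of the Steklov eigenbasis in $\mathcal H(\Omega)$. No gap; your write-up is in fact more informative than the paper's.
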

\begin{proof}
This follows directly from Theorem \ref{thm:SteklovSpectral}.
\end{proof}
\section{Integral Discretization}\label{appendix:IntegralDiscretization}
Here we discuss the influence of the approximation of the integrals in the loss function \eqref{eq:LossFunction}. As mentioned in remark \ref{remark:Warning} the discretized loss function is ill-posed on expressive sets of ansatz functions.
\begin{wrapfigure}{l}{0.54\linewidth}
    \includegraphics[width=0.95\linewidth]{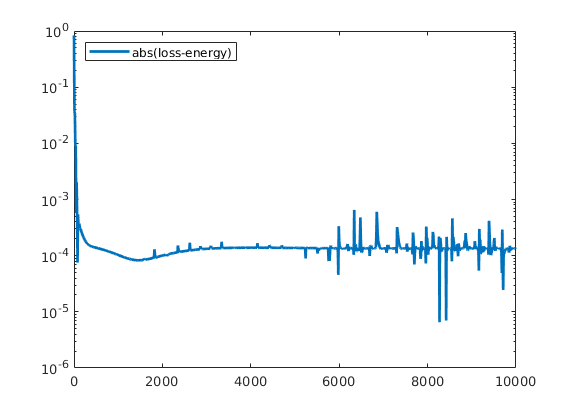}
    \caption{Evolution of the absolute value of the difference of the loss and the energy over the training process.}
    \label{pic:LossVsEnergy}
\end{wrapfigure}
To investigate whether this ill-posedness leads to relevant errors we consider the experiments on the disk, the annulus and the square as described in section \ref{sec:NumericalMethods}. 
For the disk, we conduct a training using roughly $90000$ evaluation points in the interior and $1000$ evaluation points on the boundary to approximate the integrals. More precisely, for the integral over $B_1(0)$ we use a $\varepsilon\cdot\mathbb{Z}^2$ grid with $\varepsilon = 1/160$. These were the values we employed in the other experiments throughout the article as well. Furthermore, we use a fixed penalization strength of $\lambda = 100$ and $10000$ iterations of the Adam optimizer with learning rate $0.001$. The remaining hyperparameters are chosen as reported in section \ref{sec:NumericalMethods}. To monitor the influence of the discretization, at every tenth step of the optimization process we compute the value of the loss function in two ways. On the one hand using the $90000$ evaluation points that are seen while training and on the other we use a million uniformly drawn evaluation points in the interior and $10000$ on the boundary to capture a possible deviation of the two. We refer to the latter as the ``energy'' in contrast to the loss used for training. In figure \ref{pic:LossVsEnergy}, we present the evolution of the two quantities over the training process.

We observe that the loss and the energy differ by a value of magnitude $10^{-4}$ apart from occasional oscillations. We attribute these oscillations to a comparatively large learning rate as the loss function itself does exhibit oscillations as well\footnote{For brevity, no figure is provided.}. Especially, no consistent increase of the energy-loss difference is visible in the late training process, where overfitting would be expected. We therefore conclude that the numerical approximation of the integrals does not significantly contribute to the error of the Deep Ritz Method in our experiments.

For the annulus and the square, we conduct analogous experiments. For these domains, we use roughly $250000$ evaluation points, i.e., we use a $\varepsilon\cdot\mathbb{Z}^2$ grid with $\varepsilon = 1/160$ for the annulus and $\varepsilon=1/500$ for the square. In the case of the annulus, the absolute value of the difference of the energy and the loss is around $10^{-3}$ and in the case of the square it is as large as $10^{-1}$. However, this large value for the square experiment can be explained by the fact that in the right-hand side of this equation the large factor $8\pi^2$ appears.
\end{document}